\documentclass[letterpaper, 10pt, conference]{ieeeconf}
\IEEEoverridecommandlockouts
\usepackage{amsmath,amssymb,amsfonts,amsthm}
\usepackage{graphicx}
\usepackage{textcomp}
\usepackage{xcolor}
\usepackage{cite}
\usepackage{bm}
\usepackage{mathtools}
\newtheorem{theorem}{Theorem}
\newtheorem{corollary}{Corollary}
\newtheorem{lemma}{Lemma}

\newtheorem{remark}{Remark}
\newtheorem{definition}{Definition}
\newcommand{\R}{\mathbb{R}}
\newcommand{\C}{\mathbb{C}}
\newcommand{\N}{\mathcal{N}}
\newcommand{\im}{\mathrm{Im}}

\begin{document}
\title{Bessel Function Analysis of Nesterov's ODE in $N$-Player Quadratic Games}
\author{Jay Paek$^{1}$
\thanks{$^{1}$J. Paek is with University of California, San Diego,
La Jolla, CA 92093 USA. {\tt jpaek@ucsd.edu}}}
\maketitle

\begin{abstract}
We analyze Nesterov's accelerated gradient dynamics for pseudo-gradient equilibrium seeking in \(N\)-player quadratic games. When each player's cost is convex in its own action, the pseudo-gradient stationary points considered here are Nash equilibria. While the continuous-time NAGD dynamics---the Su--Boyd--Candès ODE---are well understood for convex optimization, their behavior with non-symmetric pseudo-gradient matrices arising in games is substantially different.

Using a Bessel-function modal analysis, we show that a stationary point is generically unstable whenever any eigenvalue of the pseudo-gradient matrix \(G\) lies outside \(\mathbb R_{\ge 0}\). Conversely, if every eigenvalue lies in \(\mathbb R_{\ge 0}\) and \(G\) is diagonalizable, all trajectories are bounded and converge to \(\mathcal N(G)\), with the components transverse to \(\mathcal N(G)\) decaying at rate \(O(t^{-3/2})\). The diagonalizability hypothesis is essential: defective Jordan blocks with nonnegative eigenvalues can change the polynomial behavior and may cause divergence. In particular, complex eigenvalues with positive real parts, which stabilize first-order gradient dynamics, induce exponential instability in NAGD. This shows that the momentum mechanism responsible for acceleration in optimization can be detrimental for equilibrium seeking in non-potential games.
\end{abstract}

\section{Introduction}
Nesterov's accelerated gradient descent \cite{nesterov1983} achieves the optimal $O(1/k^2)$ convergence rate among first-order methods for smooth convex optimization. Su, Boyd, and Cand\`{e}s \cite{su2016} established that its continuous-time limit is
\begin{equation}
\ddot{X}(t) + \frac{r}{t}\dot{X}(t) + \nabla f(X(t)) = 0, \label{eq:SBC_ODE}
\end{equation}
where $r \geq 3$ is a damping parameter. For $r = 3$ and quadratic objectives $f(x) = \frac{1}{2}x^\top A x$ with $A \succ 0$, the solution can be expressed via Bessel functions, and convergence to the minimizer is guaranteed \cite{su2016}.

This paper studies~\eqref{eq:SBC_ODE} when the symmetric gradient $\nabla f$ is replaced by a non-symmetric pseudo-gradient arising in $N$-player games. Each player minimizes their own cost, leading to a pseudo-gradient matrix $G$ that is generically non-symmetric except in potential games \cite{monderer1996}. The instability of Nesterov-type dynamics in non-potential games has been studied from several angles---via multi-time-scale analysis \cite{poveda2025} over asymmetric graphs \cite{jakovetic2020}, and via hybrid restarting mechanisms \cite{poveda2023}. Our work complements these contributions by developing a \emph{Bessel function modal analysis} that yields explicit closed-form solutions, sharp spectral thresholds, and precise growth/decay rates for the quadratic case.

\subsection{Contributions}
We decompose the NAGD game dynamics into scalar modal equations and solve them explicitly using Bessel functions of complex argument, extending the quadratic analysis of \cite{su2016} from symmetric Hessians to possibly non-symmetric pseudo-gradient matrices. This yields two sharp statements. 
First, if any eigenvalue of $G$ lies outside $\mathbb{R}_\{\ge 0\}$, then the equilibrium is unstable; indeed, outside a proper linear exceptional set of initial conditions, some modal component grows with an exponential factor. Second, if all eigenvalues lie in $\mathbb{R}_\{\ge 0\}$ and $G$ is diagonalizable, then all trajectories are bounded and converge to $\mathcal{N} (G)$, with the nonzero-eigenvalue components decaying at rate $O(t^\{\-3\/2\})$. The convergence direction is therefore a diagonalizable-matrix result, not a purely eigenvalue-only statement for arbitrary matrices.

\subsection{Related Work}
The ODE perspective on Nesterov's method was pioneered by Su, Boyd, and Cand\`{e}s \cite{su2016}, and extended to Hessian-driven damping \cite{attouch2018}, high-resolution ODEs \cite{shi2022}, and Lyapunov-based analysis \cite{wilson2021}; these works consider optimization with symmetric Hessians. Ochoa et al.~\cite{poveda2023} studied Nesterov's ODE for games using Lyapunov and multi-time-scale techniques, and proposed hybrid restarting mechanisms; instability for general non-conservative mappings was further analyzed in \cite{poveda2025}. 

While \cite{poveda2025} establishes instability of Nesterov's ODE under non-conservative mappings using averaging and Floquet-theoretic ideas, our Bessel-function approach gives complementary information in the quadratic case: explicit closed-form modal solutions, precise exponential growth rates such as \(|\operatorname{Im}\sqrt{\lambda}|\) for complex eigenvalues, and an eigenvalue-level instability criterion. The matching convergence statement holds under the additional hypothesis that \(G\) is diagonalizable. Thus the present result is a complete spectral characterization only within the diagonalizable class; defective Jordan blocks with eigenvalues in \(\mathbb R_{\ge 0}\) require separate polynomial analysis and can invalidate an eigenvalue-only convergence claim. We do not address the hybrid restarting mechanisms proposed in \cite{poveda2023}.

Jakoveti\'{c} et al.~\cite{jakovetic2020} analyzed Nesterov-type acceleration over directed graphs. Stability of first-order gradient dynamics in games has been studied in \cite{rosen1965,hofbauer2009,mertikopoulos2019}, with spectral characterizations by Chasnov et al.~\cite{chasnov2020, chasnov2020b} and local convergence results by Mazumdar et al.~\cite{mazumdar2020}. Recent work has applied Nesterov-type acceleration to distributed Nash equilibrium seeking \cite{liu2024,wang2025}, typically assuming strong monotonicity.

\subsection{Organization}
Section~\ref{sec:prelim} establishes the problem setting. Section~\ref{sec:projection} develops the Bessel function analysis. Section~\ref{sec:main} presents the main spectral characterization. Section~\ref{sec:simulations} provides numerical illustrations, and Section~\ref{sec:discussion} concludes.

\section{Preliminaries}
\label{sec:prelim}

\subsection{Notation}
Let $\R_{\geq 0}$ denote non-negative reals. For $A \in \R^{n \times n}$, the null space is $\N(A) = \{x : Ax = 0\}$ and the condition number is $\kappa(A) = \|A\|\|A^{-1}\|$. For $z \in \C$, we write $\Re(z)$ and $\im(z)$ for real and imaginary parts. The principal square root $\sqrt{\lambda}$ is chosen with $\Re(\sqrt{\lambda}) \geq 0$.

\subsection{Quadratic Games and the Pseudo-Gradient}
Consider an \(N\)-player scalar-action quadratic game in which player
\(i\) controls \(x_i\in\mathbb R\) and has cost
\[
J_i(x)=x^\top Q_i x+d_i^\top x,
\qquad
x=(x_1,\ldots,x_N)^\top,
\]
where \(Q_i\in\mathbb R^{N\times N}\) is symmetric and
\(d_i\in\mathbb R^N\). This formulation allows each player's cost to
depend on the full joint action profile.

\begin{definition}[Pseudo-gradient]
The pseudo-gradient \(F:\mathbb R^N\to\mathbb R^N\) has components
\[
F_i(x)=\frac{\partial J_i}{\partial x_i}(x).
\]
\end{definition}

For the quadratic costs above,
\[
F_i(x)=2\sum_{j=1}^N (Q_i)_{ij}x_j+(d_i)_i,
\]
so \(F(x)=Gx+b\), where
\[
G_{ij}=2(Q_i)_{ij},
\qquad
b_i=(d_i)_i .
\]
We call any point \(x^\star\) satisfying \(F(x^\star)=0\) a
pseudo-gradient stationary point, or pseudo-gradient equilibrium.

If each player is convex in its own action, equivalently
\[
\frac{\partial^2 J_i}{\partial x_i^2}=2(Q_i)_{ii}=G_{ii}\ge 0
\qquad \text{for all } i,
\]
then any pseudo-gradient equilibrium is a Nash equilibrium for the unconstrained scalar-action game. Without this own-action convexity condition, \(F(x^\star)=0\) is only a stationary point and need not be a Nash equilibrium.

After shifting coordinates by a pseudo-gradient equilibrium
\(x^\star\), we assume without loss of generality that \(b=0\) and
\(x^\star=0\). The matrix \(G\) is symmetric if and only if the pseudo-gradient is the gradient of a quadratic potential \cite{monderer1996}.

\subsection{NAGD Game Dynamics}
Applying~\eqref{eq:SBC_ODE} with $r = 3$ and the pseudo-gradient yields
\begin{equation}
\ddot{x}(t) + \frac{3}{t}\dot{x}(t) + Gx(t) = 0, \quad t \in [t_0, \infty), \; t_0 > 0. \label{eq:NAGD_game}
\end{equation}
We work on $[t_0, \infty)$ to avoid the singularity at $t = 0$, and focus on $r = 3$, which is optimal for $O(1/t^2)$ convergence in optimization \cite{su2016}. For comparison, the first-order gradient dynamics \(\dot x=-Gx\) are
asymptotically stable if and only if all eigenvalues of \(G\) have positive real parts. For the NAGD dynamics below, any eigenvalue outside \(\mathbb R_{\ge 0}\) already produces generic exponential instability.
Conversely, when \(G\) is diagonalizable, the condition \(\sigma(G)\subset \mathbb R_{\ge 0}\) is sufficient for convergence. Without diagonalizability, eigenvalues alone do not determine convergence.

\section{Modal Decomposition and Bessel Function Solutions}
\label{sec:projection}

Our approach projects the dynamics onto eigenspaces of $G$, yielding scalar Bessel-type equations. We use \emph{left} eigenvectors because $G$ is generically non-symmetric.

\subsection{Eigenvector Projection}
\begin{lemma}[Modal projection]
\label{lem:modal}
Let $w \in \C^N \setminus \{0\}$ satisfy $w^*G = \lambda w^*$. Then $y(t) = w^* x(t)$ satisfies
\begin{equation}
\ddot{y}(t) + \frac{3}{t}\dot{y}(t) + \lambda y(t) = 0. \label{eq:modal}
\end{equation}
\end{lemma}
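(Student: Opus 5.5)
The plan is to left-multiply the NAGD game dynamics \eqref{eq:NAGD_game} by the fixed row vector $w^*$ and use linearity. Let $x(t)$ be a solution of \eqref{eq:NAGD_game} on $[t_0,\infty)$ with $t_0>0$. Because \eqref{eq:NAGD_game} is linear with coefficients continuous on $[t_0,\infty)$, the solution is $C^2$ there. Hence $y(t)=w^*x(t)$ is $C^2$, with $\dot y = w^*\dot x$ and $\ddot y = w^*\ddot x$, since $w^*$ is a constant (complex) linear functional. It commutes with differentiation componentwise, after separating real and imaginary parts of $w$ if one wants to be pedantic.

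Applying $w^*$ to both sides of \eqref{eq:NAGD_game}, and noting that the scalar $3/t$ commutes with $w^*$, gives
\[
w^*\ddot x(t) + \frac{3}{t}\,w^*\dot x(t) + w^*G\,x(t) = 0 .
\]
The only substantive step is the last term. Here the left-eigenvector relation $w^*G=\lambda w^*$ gives $w^*Gx(t)=\lambda\, w^*x(t)=\lambda y(t)$. Substituting yields exactly \eqref{eq:modal}.

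There is no real obstacle. The one point to check is that we use left rather than right eigenvectors. This is what lets $G$ pass through the projection without assuming $G$ is symmetric or diagonalizable: a right eigenvector $v$ would not give a closed scalar equation for any coordinate of $x$. I would also note that $\lambda$ and $w$ may be complex even though $x(t)$ is real, so $y$ is complex-valued. This is harmless, since the derivation is purely algebraic over $\C$.
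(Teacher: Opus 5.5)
Your proposal is correct and takes the same approach as the paper: apply $w^*$ to \eqref{eq:NAGD_game}, pass it through the time derivatives by linearity, and use $w^*G=\lambda w^*$ on the last term. Your added remarks on $C^2$ regularity and on $y$ being complex-valued are sound but go beyond the paper's one-line proof.
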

\begin{proof}
Apply $w^*$ to~\eqref{eq:NAGD_game} and use $w^*G = \lambda w^*$.
\end{proof}

\subsection{Bessel Function Solutions}
\begin{lemma}[General solution]
\label{lem:bessel}
For $\lambda \neq 0$, the general solution of~\eqref{eq:modal} is
\begin{equation}
y(t) = \frac{1}{t}\left[c_1 J_1(\sqrt{\lambda}\, t) + c_2 Y_1(\sqrt{\lambda}\, t)\right], \label{eq:bessel_sol}
\end{equation}
where $J_1, Y_1$ are Bessel functions of the first and second kind of order $1$, and $c_1, c_2 \in \C$ are determined by initial conditions. For $\lambda = 0$: $y(t) = c_1 + c_2 t^{-2}$.
\end{lemma}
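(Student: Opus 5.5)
The plan is to reduce \eqref{eq:modal} to Bessel's equation of order one by the substitution $y(t)=u(t)/t$. This is suggested by the Su--Boyd--Cand\`{e}s computation for $r=3$. With $y=u/t$ we have
\[
\dot y = \frac{\dot u}{t}-\frac{u}{t^2},
\qquad
\ddot y = \frac{\ddot u}{t}-\frac{2\dot u}{t^2}+\frac{2u}{t^3}.
\]
Substituting into \eqref{eq:modal} and multiplying by $t^3$ should give
\[
t^2\ddot u + t\dot u + (\lambda t^2 - 1)u = 0 .
\]
I will verify this routine expansion term by term: the $\dot u$ coefficients combine as $-2+3=1$, and the $u$ coefficients as $2-3=-1$ before the $\lambda t^2$ term.

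For $\lambda\neq 0$, set $s=\sqrt{\lambda}\,t$, using the principal root fixed in the notation. Writing $u(t)=v(s)$, the chain rule gives $t\dot u = s v'(s)$ and $t^2\ddot u = s^2 v''(s)$. The equation then becomes
\[
s^2 v'' + s v' + (s^2-1)v = 0,
\]
which is Bessel's equation of order $1$. This equation is analytic in $s$ on $\C\setminus(-\infty,0]$ with the standard branches of $J_1$ and $Y_1$. Because $t>0$ and $\Re\sqrt{\lambda}\ge 0$ with $\sqrt{\lambda}\neq 0$, the ray $s=\sqrt{\lambda}\,t$ stays in the closed right half-plane minus the origin. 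In particular it avoids the branch cut, except that it may run along the imaginary axis, where the principal branches are still holomorphic. So $t\mapsto J_1(\sqrt\lambda t)$ and $t\mapsto Y_1(\sqrt\lambda t)$ are smooth solutions, by the holomorphic chain rule.

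The step needing the most care is showing that these two functions span the whole solution space over $\C$. The solution space of the second-order linear ODE on $[t_0,\infty)$, where the coefficient $3/t$ is smooth, is two-dimensional. So it suffices that the Wronskian is nonzero. I would invoke the standard identity $W[J_1,Y_1](s)=2/(\pi s)$, valid on the slit plane. With the chain rule this gives
\[
W\bigl[J_1(\sqrt\lambda t),\,Y_1(\sqrt\lambda t)\bigr](t)=\sqrt\lambda\cdot\frac{2}{\pi\sqrt\lambda\, t}=\frac{2}{\pi t}\neq 0.
\]
Dividing by $t$ preserves linear independence. Hence every solution has the form \eqref{eq:bessel_sol}, with $c_1,c_2$ fixed uniquely by $y(t_0)$ and $\dot y(t_0)$ through an invertible $2\times 2$ system.

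For $\lambda=0$, the equation $\ddot y+\frac{3}{t}\dot y=0$ is first order in $\dot y$. Multiplying by $t^3$ gives $(t^3\dot y)'=0$, so $\dot y = Ct^{-3}$ and therefore $y=c_1+c_2t^{-2}$. These two solutions are clearly independent, which completes the case.
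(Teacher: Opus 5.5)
Your proof is correct and follows the same route as the paper: the substitution $y=u/t$, followed by the rescaling $s=\sqrt{\lambda}\,t$, reduces \eqref{eq:modal} to Bessel's equation of order one, and the $\lambda=0$ case follows by integrating $(t^3\dot y)'=0$. You also supply two details the paper leaves implicit, and both are right: the check that the ray $\sqrt{\lambda}\,t$ stays off the branch cut of $Y_1$, and the Wronskian $2/(\pi t)\neq 0$, which shows that $J_1(\sqrt{\lambda}\,t)/t$ and $Y_1(\sqrt{\lambda}\,t)/t$ span the solution space.
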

\begin{proof}
The substitution $y(t) = t^{-1}u(t)$ transforms~\eqref{eq:modal} into Bessel's equation of order $1$ \cite{olver2010}. The $\lambda = 0$ case follows by direct integration.
\end{proof}

\subsection{Asymptotic Behavior}
\begin{lemma}[Modal asymptotics]
\label{lem:asymptotics}
Let $y(t)$ solve~\eqref{eq:modal} with
$(y(t_0), \dot{y}(t_0)) \in \C^2$.
\begin{enumerate}
\item[(i)] $\lambda > 0$: $y(t) = O(t^{-3/2}) \to 0$ and
$\dot{y}(t) = O(t^{-3/2}) \to 0$.
\item[(ii)] $\lambda < 0$: Writing $\lambda = -\mu^2$, there is a
one-dimensional complex subspace $\mathcal S_\lambda \subset \C^2$ such that, for all initial conditions outside $\mathcal S_\lambda$, $|y(t)| \to \infty$ exponentially at rate $\mu$. For real scalar initial conditions, the exceptional set is a one-dimensional subspace of $\R^2$ and hence has Lebesgue measure zero.
\item[(iii)] $\lambda \in \C \setminus \R$: There is a one-dimensional complex subspace $\mathcal S_\lambda \subset \C^2$ such that, for all initial conditions outside $\mathcal S_\lambda$, $|y(t)| \to \infty$ exponentially at rate $|\im(\sqrt{\lambda})|$. Moreover, $\mathcal S_\lambda \cap \R^2 = \{(0,0)\}$, so every nonzero real scalar initial condition grows exponentially.
\item[(iv)] $\lambda=0$: 
\[
y(t)
=
y(t_0)+\frac{t_0}{2}\dot y(t_0)
\left(1-\frac{t_0^2}{t^2}\right),
\qquad
\dot y(t)=\left(\frac{t_0}{t}\right)^3\dot y(t_0).
\]
In particular, \(y(t)\) converges to a finite limit and \(\dot y(t)\to0\).
\end{enumerate}
\end{lemma}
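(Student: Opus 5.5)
The plan is to work case by case from the closed form in Lemma~\ref{lem:bessel}, using the standard large-argument asymptotics of Bessel functions. The key fact is that for $|z|\to\infty$ with $|\arg z|<\pi$,
\[
J_1(z)=\sqrt{\tfrac{2}{\pi z}}\bigl(\cos(z-\tfrac{3\pi}{4})+O(|z|^{-1})e^{|\im z|}\bigr),
\]
and similarly $Y_1(z)$ with $\sin$ in place of $\cos$. Throughout, the map $(c_1,c_2)\mapsto (y(t_0),\dot y(t_0))$ is a linear bijection of $\C^2$, since the Wronskian of $t^{-1}J_1(\sqrt\lambda t)$ and $t^{-1}Y_1(\sqrt\lambda t)$ is nonzero. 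Hence any condition that is linear in $(c_1,c_2)$ transfers to a one-dimensional subspace of initial data.

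\textbf{Cases (iv) and (i).} Case (iv) is elementary. Writing $\frac{d}{dt}(t^3\dot y)=0$ gives $\dot y=(t_0/t)^3\dot y(t_0)$, and integrating once more yields the stated formula.

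For (i), $\sqrt\lambda t$ is real and $J_1,Y_1=O(t^{-1/2})$, so $y=O(t^{-3/2})$. For the derivative, use
\[
\frac{d}{dt}\bigl[t^{-1}\mathcal C_1(\sqrt\lambda t)\bigr]=-\sqrt\lambda\, t^{-1}\mathcal C_2(\sqrt\lambda t)
\]
for $\mathcal C\in\{J,Y\}$. This comes from $(z^{-1}\mathcal C_1)'=-z^{-1}\mathcal C_2$, and since $\mathcal C_2=O(t^{-1/2})$ we get $\dot y=O(t^{-3/2})$.

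\textbf{Cases (ii) and (iii).} Switch to the Hankel basis $H^{(1)}_1,H^{(2)}_1$, with
\[
H^{(1,2)}_1(z)\sim\sqrt{2/(\pi z)}\,e^{\pm i(z-3\pi/4)}.
\]
Write $y=t^{-1}[a_1H^{(1)}_1(\sqrt\lambda t)+a_2H^{(2)}_1(\sqrt\lambda t)]$. Since $\Re\sqrt\lambda\ge0$ and $\im\sqrt\lambda\ne0$ whenever $\lambda\notin\R_{\ge0}$, set $s=\sqrt\lambda$.
\begin{itemize}
\item If $\im s>0$, then $H^{(2)}_1(st)$ grows like $e^{\im s\, t}$ and $H^{(1)}_1$ decays.
\item If $\im s<0$, the roles of the two Hankel functions are reversed.
\end{itemize}
In either case the argument $st$ lies on a fixed ray with $|\arg|<\pi$ (for $\lambda<0$, $s=i\mu$ and the argument is $\pi/2$), so the asymptotics apply uniformly. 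Thus $|y(t)|\asymp |a_{\rm grow}|\,t^{-3/2}e^{|\im s|t}$ unless $a_{\rm grow}=0$. The exceptional set $\mathcal S_\lambda$ is the image of $\{a_{\rm grow}=0\}$, a complex line. For $\lambda=-\mu^2$ the growth rate is $\mu$. Here the equation has real coefficients and the decaying solution is $t^{-1}K_1(\mu t)$, which is real, so $\mathcal S_\lambda\cap\R^2$ is a real line of measure zero.

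\textbf{Main obstacle: $\mathcal S_\lambda\cap\R^2=\{0\}$ for nonreal $\lambda$.} Suppose real initial data $(y_0,v_0)\neq0$ lie in $\mathcal S_\lambda$, so the solution $y$ decays. The complex conjugate $\bar y$ solves the equation with $\bar\lambda$ and has the same real initial data. The function $u=y-\bar y$ has zero initial data but solves neither equation, so instead I would argue via $\Re y$ and $\im y$ directly.

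I would try a Wronskian or energy-type identity. Multiply the equation by $t^3\bar y$ to get
\[
\frac{d}{dt}\bigl(t^3\dot y\bar y\bigr)-t^3|\dot y|^2+\lambda t^3|y|^2=0.
\]
Integrate from $t_0$ to $\infty$, where the boundary term at infinity vanishes by exponential decay of $y$ and $\dot y$. Taking imaginary parts, the term $-t_0^3\,\im(v_0\bar y_0)$ vanishes because the data are real. This gives
\[
\im\lambda\int_{t_0}^\infty t^3|y|^2\,dt=0,
\]
which forces $y\equiv0$, a contradiction.

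This last step is the one I expect to require the most care. It needs the decay of $\dot y$ on the decaying branch, which follows from the same derivative identity used in (i), now applied with Hankel functions.
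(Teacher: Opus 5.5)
Your proposal is correct and follows essentially the same route as the paper. It uses Bessel asymptotics for (i), direct integration of $(t^3\dot y)'=0$ for (iv), and a growing/decaying basis for (ii) and (iii) (Hankel throughout, where the paper uses $I_1,K_1$ for (ii)), with the exceptional line given by vanishing of the growing coefficient; for the real-data exclusion you use the same $t^3$-weighted identity $\frac{d}{dt}\im(t^3\dot y\bar y)=-\im(\lambda)\,t^3|y|^2$ integrated to infinity, which the paper writes via $Q=y\overline{\dot y}-\dot y\bar y$. Your recurrence $\frac{d}{dt}[t^{-1}\mathcal C_1(\sqrt\lambda t)]=-\sqrt\lambda\,t^{-1}\mathcal C_2(\sqrt\lambda t)$ also justifies the derivative bounds somewhat more cleanly than the paper's appeal to differentiating the asymptotic expansions.
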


\begin{proof}
(i) For $\sqrt{\lambda} \in \R_{>0}$, the Bessel asymptotics
$J_1(z),Y_1(z)=O(z^{-1/2})$ as $z\to\infty$ \cite{olver2010} give
$y(t)=O(t^{-3/2})$. Differentiating~\eqref{eq:bessel_sol} and using
$J_1'(z),Y_1'(z)=O(z^{-1/2})$ gives
$\dot y(t)=O(t^{-3/2})$.

(ii) For $\lambda=-\mu^2$, the scalar equation becomes
\[
\ddot y+\frac{3}{t}\dot y-\mu^2 y=0.
\]
Equivalently,
\[
y(t)=\frac{1}{t}\left[A I_1(\mu t)+B K_1(\mu t)\right],
\]
where $I_1$ and $K_1$ are modified Bessel functions. The Wronskian of
$I_1(\mu t)/t$ and $K_1(\mu t)/t$ is nonzero, so the map from
$(y(t_0),\dot y(t_0))$ to $(A,B)$ is a linear isomorphism. Since
\[
I_1(\mu t)\sim \frac{e^{\mu t}}{\sqrt{2\pi\mu t}},
\qquad
K_1(\mu t)\sim \sqrt{\frac{\pi}{2\mu t}}\,e^{-\mu t},
\]
the solution grows exponentially at rate $\mu$ unless $A=0$. Thus the
exceptional set is the one-dimensional complex subspace
$\mathcal S_\lambda=\{(y(t_0),\dot y(t_0)):A=0\}$. When the initial data
are real, the coefficients $A,B$ are real, and $A=0$ defines a real line
in $\R^2$.

(iii) See Appendix~I.

(iv) For $\lambda=0$, let $p(t)=\dot y(t)$. Then
\[
p'(t)+\frac{3}{t}p(t)=0,
\]
so
\[
p(t)=p(t_0)\left(\frac{t_0}{t}\right)^3.
\]
Integrating from $t_0$ to $t$ gives
\[
y(t)
=
y(t_0)+\dot y(t_0)t_0^3\int_{t_0}^t s^{-3}\,ds
=
y(t_0)+\frac{t_0}{2}\dot y(t_0)
\left(1-\frac{t_0^2}{t^2}\right).
\]
\end{proof}

The following corollary summarizes the instability consequence of
Lemma~\ref{lem:asymptotics}; Theorems~\ref{thm:symmetric}--\ref{thm:general}
in Section~\ref{sec:main} will refine this into convergence statements
under the corresponding spectral hypotheses.

\begin{corollary}[Generic instability]
\label{cor:generic}
If $G$ has an eigenvalue $\lambda \notin \R_{\geq 0}$, then for
Lebesgue-almost every initial condition
$(x(t_0),\dot{x}(t_0))\in \R^{2N}$, $\|x(t)\|_2 \to \infty$
exponentially.
\end{corollary}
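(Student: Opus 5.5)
Fix an eigenvalue $\lambda\notin\R_{\ge0}$ of $G$ and a left eigenvector $w\in\C^N\setminus\{0\}$ with $w^*G=\lambda w^*$. By Lemma~\ref{lem:modal}, the scalar $y(t)=w^*x(t)$ solves~\eqref{eq:modal}, with initial data $(y(t_0),\dot y(t_0))=(w^*x(t_0),w^*\dot x(t_0))$. Define the real-linear map
\[
L:\R^{2N}\to\C^2,\qquad L(x_0,v_0)=(w^*x_0,\,w^*v_0).
\]
Lemma~\ref{lem:asymptotics}(ii)--(iii) says that whenever $L(x_0,v_0)\notin\mathcal S_\lambda$, $|y(t)|$ grows exponentially at rate $\mu$ or $|\im\sqrt\lambda|$. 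These rates are positive, since $\lambda\notin\R_{\ge0}$ forces $\sqrt\lambda\notin\R$. Cauchy--Schwarz gives $\|x(t)\|_2\ge |w^*x(t)|/\|w\|_2$, so $\|x(t)\|_2\to\infty$ exponentially on the complement of the exceptional set $E=L^{-1}(\mathcal S_\lambda)\subset\R^{2N}$.

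It remains to show that $E$ has Lebesgue measure zero. The set $\mathcal S_\lambda$ is a complex line, hence a real-linear subspace of $\C^2$, so $E$ is a real-linear subspace of $\R^{2N}$. A proper linear subspace is Lebesgue-null, so it suffices to find one point not in $E$.

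\emph{Case $\lambda<0$.} Here $G-\lambda I$ is real and singular, so $w$ may be chosen real. Then $L$ maps onto $\R^2$. Since $\mathcal S_\lambda\cap\R^2$ is a line by Lemma~\ref{lem:asymptotics}(ii), some real datum lies outside it, and $E\neq\R^{2N}$.

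\emph{Case $\lambda\in\C\setminus\R$.} The real image $\{(w^*x_0,w^*v_0)\}$ no longer sits inside $\R^2$, so Lemma~\ref{lem:asymptotics}(iii)'s statement $\mathcal S_\lambda\cap\R^2=\{0\}$ does not apply directly. I would first try to show that $E\ne\R^{2N}$ by taking $v_0=0$ and $x_0$ with $w^*x_0\neq0$; such an $x_0$ exists because $w\neq0$ and hence $\Re w$ or $\Im w$ is nonzero. Then $L(x_0,0)=(c,0)$ with $c\ne0$. If $(c,0)\in\mathcal S_\lambda$, then $\mathcal S_\lambda=\mathrm{span}_\C(1,0)$, which contains the real vector $(1,0)$. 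That contradicts $\mathcal S_\lambda\cap\R^2=\{0\}$. So $(x_0,0)\notin E$.

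The only delicate point is this complex case. The argument needs $\mathcal S_\lambda$ to be a genuine complex subspace, so that membership of $(c,0)$ implies membership of $(1,0)$. Lemma~\ref{lem:asymptotics}(iii) asserts exactly this. Granting it, the argument closes: $E$ is a proper real subspace of $\R^{2N}$, hence null, and exponential divergence holds for almost every initial condition.
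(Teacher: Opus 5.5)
Your proof is correct. Its skeleton matches the paper's: the modal data map, the observation that $E=L^{-1}(\mathcal S_\lambda)$ is a real-linear subspace, and the transfer $\|x(t)\|_2\ge |w^*x(t)|/\|w\|_2$. The $\lambda<0$ case is identical.

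The difference is how you show that $E$ is proper when $\lambda\in\C\setminus\R$. The paper writes $w=a+ib$ and argues that $a,b$ are linearly independent, since otherwise a nonzero real vector would be a left eigenvector for a non-real eigenvalue. It concludes that $x\mapsto w^*x$ maps $\R^N$ onto $\C$, so $\Phi_w$ maps $\R^{2N}$ onto $\C^2$, and the preimage of any proper subspace is proper. That route uses only $\mathcal S_\lambda\neq\C^2$, and as a bonus shows that $E$ has real codimension two.

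You instead exhibit the single point $(x_0,0)\notin E$. This spares you the linear-independence step, but it relies on the stronger analytic fact $\mathcal S_\lambda\cap\R^2=\{(0,0)\}$ from Lemma~\ref{lem:asymptotics}(iii). The proof of that fact in the appendix needs the $t^3Q(t)$ identity.

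Neither extra ingredient is actually necessary. With $w^*x_0=c\neq0$, the points $L(x_0,0)=(c,0)$ and $L(0,x_0)=(0,c)$ span $\C^2$ over $\C$. So they cannot both lie in the complex line $\mathcal S_\lambda$, and one of $(x_0,0)$, $(0,x_0)$ lies outside $E$. This single argument handles both cases of $\lambda$.
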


\begin{proof}
Let $w$ be a left eigenvector for $\lambda$, so $w^*G=\lambda w^*$, and
define the modal-data map
\[
\Phi_w(x_0,v_0)=(w^*x_0,w^*v_0),
\qquad (x_0,v_0)\in\R^N\times\R^N.
\]
The corresponding modal variable $y(t)=w^*x(t)$ satisfies
\eqref{eq:modal}.

First suppose $\lambda<0$. Since $G$ is real, $w$ may be chosen real.
Then $\Phi_w:\R^{2N}\to\R^2$ is onto, and by
Lemma~\ref{lem:asymptotics}(ii) the nongrowing scalar data form a real
line in $\R^2$. Its preimage under $\Phi_w$ is a proper linear subspace
of $\R^{2N}$, hence has Lebesgue measure zero.

Now suppose $\lambda\in\C\setminus\R$. Write $w=a+ib$ with
$a,b\in\R^N$. The vectors $a$ and $b$ are linearly independent; otherwise
a nonzero real vector would be a left eigenvector associated with the
non-real eigenvalue $\lambda$, which is impossible for a real matrix.
Hence the real-linear map $x\mapsto w^*x=a^\top x-i b^\top x$ maps
$\R^N$ onto $\C$, and therefore $\Phi_w$ maps $\R^{2N}$ onto $\C^2$.
By Lemma~\ref{lem:asymptotics}(iii), the nongrowing scalar data form a
one-dimensional complex subspace $\mathcal S_\lambda\subset\C^2$. Thus
$\Phi_w^{-1}(\mathcal S_\lambda)$ is a proper real linear subspace of
$\R^{2N}$ and has Lebesgue measure zero.

For all initial conditions outside this exceptional set, $|y(t)|$ grows
exponentially. Since
\[
|y(t)|=|w^*x(t)|\leq \|w\|_2\|x(t)\|_2,
\]
we obtain $\|x(t)\|_2\geq |y(t)|/\|w\|_2\to\infty$ exponentially.
\end{proof}

\subsection{Null Space Dynamics}
\begin{lemma}[Null space projection]
\label{lem:nullspace}
If $w \in \N(G^\top) \setminus \{0\}$, then $y_1(t) = w^\top x(t)$
satisfies
\[
y_1(t) =
y_1(t_0)+\frac{t_0}{2}\dot y_1(t_0)
\left(1-\frac{t_0^2}{t^2}\right),
\qquad
\dot{y}_1(t) = \left(\frac{t_0}{t}\right)^3 \dot{y}_1(t_0).
\]
In particular,
\[
y_1(t) \to y_1(t_0)+\frac{t_0}{2}\dot{y}_1(t_0),
\qquad
\dot{y}_1(t)\to 0.
\]
\end{lemma}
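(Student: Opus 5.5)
This is a direct specialization of Lemma~\ref{lem:modal} combined with Lemma~\ref{lem:asymptotics}(iv), so the plan has two short steps. First, I check that $w$ is a left eigenvector of $G$ for the eigenvalue $\lambda=0$. If $w\in\N(G^\top)$, then $G^\top w=0$, and transposing gives $w^\top G=0=0\cdot w^\top$. For real $w$ we have $w^*=w^\top$, so Lemma~\ref{lem:modal} applies directly with $\lambda=0$. If $w$ is allowed to be complex, I would not invoke the lemma but instead apply $w^\top$ to~\eqref{eq:NAGD_game} directly; the computation in the proof of Lemma~\ref{lem:modal} goes through verbatim with $w^\top$ in place of $w^*$. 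Either way, $y_1=w^\top x$ satisfies
\[
\ddot y_1+\frac{3}{t}\dot y_1=0 \quad\text{on } [t_0,\infty).
\]

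Second, I invoke Lemma~\ref{lem:asymptotics}(iv) with initial data $(y_1(t_0),\dot y_1(t_0))=(w^\top x(t_0),w^\top\dot x(t_0))$. This gives the closed forms for $y_1(t)$ and $\dot y_1(t)$ exactly as stated. For self-containedness I would recall the short derivation: setting $p=\dot y_1$ gives the first-order equation $\dot p=-3p/t$, whose solution is $p(t)=p(t_0)(t_0/t)^3$. Integrating $\int_{t_0}^t t_0^3 s^{-3}\,ds=\tfrac{t_0}{2}(1-t_0^2/t^2)$ then yields $y_1$.

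The limits follow by letting $t\to\infty$: $t_0^2/t^2\to0$ and $(t_0/t)^3\to0$, so $y_1(t)\to y_1(t_0)+\tfrac{t_0}{2}\dot y_1(t_0)$ and $\dot y_1(t)\to0$.

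There is no real obstacle here. The only point requiring care is the transpose-versus-conjugate-transpose bookkeeping in the first step, that is, noting that $\N(G^\top)$ consists of left null vectors so that the modal lemma (or its direct analogue) applies.
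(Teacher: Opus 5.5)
Your proposal is correct and matches the paper's proof: you project \eqref{eq:NAGD_game} with $w^\top$ using $w^\top G=0$, reduce to $\ddot y_1+\frac{3}{t}\dot y_1=0$, solve for $p=\dot y_1$ as $p(t)=p(t_0)(t_0/t)^3$, and integrate. Routing this through Lemma~\ref{lem:modal} and Lemma~\ref{lem:asymptotics}(iv) repackages the same computation, and your handling of $w^\top$ versus $w^*$ is fine.
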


\begin{proof}
Projecting~\eqref{eq:NAGD_game} via $w^\top$ and using $w^\top G=0$
gives
\[
\ddot y_1+\frac{3}{t}\dot y_1=0.
\]
Let $p(t)=\dot y_1(t)$. Then $(t^3p(t))'=0$, so
$p(t)=p(t_0)(t_0/t)^3$. Integrating from $t_0$ to $t$ gives the stated
formula.
\end{proof}

\section{Main Results}
\label{sec:main}

\begin{definition}[Bounded stability and instability]
Fix \(t_0>0\). The equilibrium \(x=0\) of \((2)\) is called
bounded-stable if there exists \(\delta>0\) such that, whenever
\[
\|x(t_0)\|+\|\dot x(t_0)\|<\delta,
\]
the corresponding solution satisfies
\[
\sup_{t\ge t_0}\bigl(\|x(t)\|+\|\dot x(t)\|\bigr)<\infty .
\]
It is called asymptotically stable if, in addition,
\[
(x(t),\dot x(t))\to (0,0).
\]
It is called unstable if bounded stability fails.
\end{definition}
This bounded-stability notion is weaker than Lyapunov stability, which
requires trajectories starting near the equilibrium to remain uniformly
near it. The distinction matters when \(G\) has a nontrivial null space:
trajectories may remain bounded and converge to \(\mathcal N(G)\) without
converging to the origin.

\subsection{Symmetric Pseudo-Gradient (Potential Games)}
\begin{theorem}[Stability for potential games]
\label{thm:symmetric}
Let $G = G^\top$ with eigenvalues $\lambda_1, \ldots, \lambda_N \in \R$.
\begin{enumerate}
\item[(a)] If $G \succeq 0$, then $\dot{x}(t) \to 0$ and $x(t) \to x^\infty \in \N(G)$ with rate $O(t^{-3/2})$ for components in $\N(G)^\perp$.
\item[(b)] If $\lambda_j < 0$ for some $j$, there exist arbitrarily small initial conditions with $\|x(t)\|_2 \to \infty$ exponentially.
\end{enumerate}
\end{theorem}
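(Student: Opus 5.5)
Because $G$ is symmetric, we can take a real orthonormal eigenbasis and reduce everything to decoupled scalar problems that the earlier lemmas already solve. Let $G = V\Lambda V^\top$ with $V$ real orthogonal and columns $v_1,\dots,v_N$. Each $v_j$ is then a real left eigenvector, $v_j^\top G = \lambda_j v_j^\top$. By Lemma~\ref{lem:modal}, each modal coordinate $y_j(t)=v_j^\top x(t)$ solves~\eqref{eq:modal} with $\lambda=\lambda_j$. The full state is recovered exactly as $x(t)=\sum_j y_j(t)v_j$, so
\[
\|x(t)\|_2^2=\sum_j |y_j(t)|^2, \qquad \|\dot x(t)\|_2^2=\sum_j|\dot y_j(t)|^2 .
\]
Orthogonality is the key simplification here: it removes the need for any condition-number or biorthogonality bookkeeping that the non-symmetric case would require.

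For (a), $G\succeq 0$ gives $\lambda_j\ge 0$ for every $j$, and we split the indices into $\lambda_j>0$ and $\lambda_j=0$.
\begin{itemize}
\item For $\lambda_j>0$, Lemma~\ref{lem:asymptotics}(i) gives $y_j(t),\dot y_j(t)=O(t^{-3/2})$.
\item For $\lambda_j=0$, $v_j\in\N(G)=\N(G^\top)$, and Lemma~\ref{lem:nullspace} gives $y_j(t)\to y_j(t_0)+\tfrac{t_0}{2}\dot y_j(t_0)$ and $\dot y_j(t)=O(t^{-3})$.
\end{itemize}
Now define
\[
x^\infty=\sum_{\lambda_j=0}\Bigl(y_j(t_0)+\tfrac{t_0}{2}\dot y_j(t_0)\Bigr)v_j\in\N(G).
\]
Summing the modal estimates gives $\dot x(t)\to 0$ and $x(t)\to x^\infty$. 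The orthogonal projection of $x(t)$ onto $\N(G)^\perp=\operatorname{span}\{v_j:\lambda_j>0\}$ is $O(t^{-3/2})$, which is the claimed rate. The $O(t^{-3/2})$ constants depend on the initial data only through the Bessel coefficients $c_1,c_2$, and these are finitely many fixed numbers, so the finite sum causes no trouble.

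For (b), fix $j$ with $\lambda_j<0$ and write $\lambda_j=-\mu^2$. Lemma~\ref{lem:asymptotics}(ii) shows that the non-growing scalar data form a real line $\mathcal S\subset\R^2$. Choose $(a,b)\in\R^2\setminus\mathcal S$ and, for $\varepsilon>0$, take the initial condition
\[
x(t_0)=\varepsilon a v_j, \qquad \dot x(t_0)=\varepsilon b v_j .
\]
Since $\mathcal S$ is a linear subspace, $\varepsilon(a,b)\notin\mathcal S$ as well. Then $y_j$ grows like $e^{\mu t}/t^{3/2}$, and $\|x(t)\|_2\ge|y_j(t)|\to\infty$ exponentially. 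Because $\varepsilon$ is arbitrary, this gives arbitrarily small divergent initial conditions. (The almost-every version of this conclusion is already Corollary~\ref{cor:generic}.)

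The main obstacle is making the rate in (a) honest. Lemma~\ref{lem:asymptotics}(i) is stated for a fixed $t_0$ and relies on the large-argument Bessel asymptotics. I would simply note that only finitely many modes are involved and that $t\ge t_0>0$ keeps the argument $\sqrt{\lambda_j}\,t$ bounded away from $0$, so each $O(\cdot)$ bound holds uniformly on $[t_0,\infty)$ after enlarging its constant on the compact initial interval.
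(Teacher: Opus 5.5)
Your proposal is correct and follows the paper's proof essentially step for step. An orthonormal eigenbasis decouples the system into scalar modes, Lemma~\ref{lem:asymptotics}(i) handles $\lambda_j>0$, and the zero modes converge; you cite Lemma~\ref{lem:nullspace} where the paper cites Lemma~\ref{lem:asymptotics}(iv), but these are the same computation. Part (b) likewise scales non-exceptional data along a negative eigendirection, and your extra details are correct refinements that the paper leaves implicit: the explicit formula for $x^\infty$, the linearity argument that $\varepsilon(a,b)\notin\mathcal S$, and uniformity of the $O(t^{-3/2})$ constants on $[t_0,\infty)$.
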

\begin{proof}
Since $G=G^\top$, there is an orthonormal eigenbasis $\{w_i\}_{i=1}^N$.
Writing $y_i(t)=w_i^\top x(t)$, each mode satisfies
\[
\ddot y_i+\frac{3}{t}\dot y_i+\lambda_i y_i=0,
\]
and
\[
x(t)=\sum_{i=1}^N y_i(t)w_i,
\qquad
\dot x(t)=\sum_{i=1}^N \dot y_i(t)w_i.
\]

For part~(a), if $\lambda_i>0$, Lemma~\ref{lem:asymptotics}(i) gives
$y_i(t)=O(t^{-3/2})$ and $\dot y_i(t)=O(t^{-3/2})$. If $\lambda_i=0$,
Lemma~\ref{lem:asymptotics}(iv) gives convergence of $y_i(t)$ and
$\dot y_i(t)\to0$. Hence $\dot x(t)\to0$, while $x(t)$ converges to the
sum of the limiting zero-eigenvalue components, which lies in $\N(G)$.
The components in $\N(G)^\perp$ correspond exactly to the positive
eigenvalues and therefore decay at rate $O(t^{-3/2})$.

For part~(b), choose an eigenvector $w_j$ associated with
$\lambda_j<0$ and take initial data supported only in this eigendirection.
By Lemma~\ref{lem:asymptotics}(ii), all such scalar initial data outside
a one-dimensional exceptional subspace grow exponentially. These initial
conditions can be scaled arbitrarily small, proving instability.
\end{proof}

\begin{corollary}\label{cor:potential}
For potential games with $G = G^\top \succ 0$: $(x(t), \dot{x}(t)) \to (0, 0)$.
\end{corollary}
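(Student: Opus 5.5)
The plan is to obtain Corollary~\ref{cor:potential} as the special case of Theorem~\ref{thm:symmetric}(a) in which the null space is trivial.

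First, since \(G=G^\top\succ 0\), every eigenvalue satisfies \(\lambda_i>0\). Hence \(G\succeq 0\) and \(\N(G)=\{0\}\), so Theorem~\ref{thm:symmetric}(a) applies. It gives \(\dot x(t)\to 0\) and \(x(t)\to x^\infty\in\N(G)=\{0\}\), and therefore \((x(t),\dot x(t))\to(0,0)\).

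To make the rate explicit rather than relying only on the limit statement, I would repeat the modal argument directly. Take an orthonormal eigenbasis \(\{w_i\}\) of \(G\) and set \(y_i=w_i^\top x\). Each \(y_i\) solves \eqref{eq:modal} with \(\lambda_i>0\), so Lemma~\ref{lem:asymptotics}(i) gives \(y_i(t),\dot y_i(t)=O(t^{-3/2})\). Because the basis is orthonormal,
\[
\|x(t)\|_2^2=\sum_i |y_i(t)|^2
\quad\text{and}\quad
\|\dot x(t)\|_2^2=\sum_i |\dot y_i(t)|^2,
\]
so both tend to zero at rate \(O(t^{-3/2})\).

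No step here is a genuine obstacle. The only point worth checking is that Lemma~\ref{lem:asymptotics}(i) applies to every mode, and this holds because strict positive definiteness rules out the \(\lambda=0\) case of Lemma~\ref{lem:asymptotics}(iv). That exclusion is exactly what removes a possibly nonzero limit in \(\N(G)\).
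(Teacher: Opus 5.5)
Your proposal is correct and follows the paper's route: the corollary is the special case of Theorem~\ref{thm:symmetric}(a) with $\N(G)=\{0\}$, since strict positive definiteness puts every mode under Lemma~\ref{lem:asymptotics}(i). Your explicit $O(t^{-3/2})$ rate computation in the orthonormal eigenbasis just re-runs the paper's proof of Theorem~\ref{thm:symmetric}(a), and it is valid.
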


\subsection{Normal Pseudo-Gradient}
\begin{theorem}[Stability for normal $G$]
\label{thm:normal}
Let $G$ be normal ($GG^\top = G^\top G$) with eigenvalues
$\{\lambda_i\} \subset \C$.
\begin{enumerate}
\item[(a)] If $\lambda_i \in \R_{\geq 0}$ for all $i$, the conclusions of
Theorem~\ref{thm:symmetric}(a) hold. (Note: a real normal matrix with all
eigenvalues in $\R_{\geq 0}$ is necessarily symmetric, so this case
reduces to the potential game setting.)
\item[(b)] If $\lambda_j \notin \R_{\geq 0}$ for some $j$, then
Lebesgue-almost every initial condition leads to exponential growth. In
particular, there exist initial conditions leading to exponential growth.
\end{enumerate}
\end{theorem}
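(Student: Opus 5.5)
Part~(a) reduces to the potential-game case, and part~(b) follows directly from Corollary~\ref{cor:generic}.

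\emph{Part (a).} I will first show that a real normal matrix $G$ with $\sigma(G)\subset\R_{\geq 0}$ is symmetric. By the spectral theorem for normal matrices, there is a unitary $U$ with $G = U\Lambda U^*$ and $\Lambda$ diagonal. If every $\lambda_i$ is real, then $\Lambda^*=\Lambda$, so
\[
G^* = U\Lambda^* U^* = G .
\]
Since $G$ is real, $G^*=G^\top$, and hence $G=G^\top$. Its eigenvalues are nonnegative, so $G\succeq 0$. Theorem~\ref{thm:symmetric}(a) then applies verbatim and gives $\dot x(t)\to 0$, $x(t)\to x^\infty\in\N(G)$, and the $O(t^{-3/2})$ rate on $\N(G)^\perp$. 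This step is routine.

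\emph{Part (b).} The hypothesis $\lambda_j\notin\R_{\geq 0}$ is exactly the hypothesis of Corollary~\ref{cor:generic}, which assumed nothing about normality. It therefore yields an exceptional set of Lebesgue measure zero in $\R^{2N}$ outside of which $\|x(t)\|_2\to\infty$ exponentially. A null set cannot be all of $\R^{2N}$, so initial conditions with exponential growth exist.

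The rates can also be made explicit, since normality makes the modal picture fully orthogonal. Normality lets us take the left eigenvector $w$ to be a unitary eigenvector, with $\|w\|_2=1$. The exponential rate is then $\mu=\sqrt{-\lambda_j}$ if $\lambda_j<0$, by Lemma~\ref{lem:asymptotics}(ii), and $|\im\sqrt{\lambda_j}|$ if $\lambda_j\notin\R$, by Lemma~\ref{lem:asymptotics}(iii).

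Because the exceptional set is a proper linear subspace, and hence a cone, arbitrarily small initial conditions outside it also grow. This gives instability in the sense of the bounded-stability definition.

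I expect no real obstacle. The only point needing care is justifying the symmetry claim in (a) through the unitary diagonalization, since a real normal matrix is diagonalizable only over $\C$. Once $\Lambda$ is known to be real, that is immediate.
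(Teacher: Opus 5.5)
Your proposal is correct and matches the paper's proof. Part (a) uses that a real normal matrix with real spectrum is symmetric, which you justify via the unitary diagonalization where the paper simply asserts it, and then invokes Theorem~\ref{thm:symmetric}(a); part (b) is exactly Corollary~\ref{cor:generic}, which does not use normality. Your extra remarks on explicit rates and on the scale invariance of the exceptional set's complement (giving arbitrarily small growing data) are correct but not needed.
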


\begin{proof}
For part~(a), since $G$ is real and normal with
$\lambda_i\in\R_{\geq0}$ for all $i$, it has real spectrum. A real normal
matrix with real spectrum is symmetric, so the claim follows from
Theorem~\ref{thm:symmetric}(a).

Part~(b) follows directly from Corollary~\ref{cor:generic}.
\end{proof}

\begin{corollary}[Complex eigenvalues with positive real parts]
\label{cor:complex}
Let $G$ be normal, suppose $\Re(\lambda_i)>0$ for all eigenvalues
$\lambda_i$, and suppose at least one eigenvalue has the form
$\lambda_j=a+ib$ with $b\neq0$. Then the first-order dynamics
$\dot{x}=-Gx$ are exponentially stable with rate
$\min_i \Re(\lambda_i)$, while NAGD~\eqref{eq:NAGD_game} is
exponentially unstable. Generic modal data associated with $\lambda_j$
grow at rate
\[
|\im(\sqrt{\lambda_j})|
=
\sqrt{\frac{\sqrt{a^2+b^2}-a}{2}}.
\]
\end{corollary}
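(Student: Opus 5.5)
The corollary combines three pieces: a standard fact about first-order linear dynamics, a direct appeal to Theorem~\ref{thm:normal}(b) and Corollary~\ref{cor:generic}, and an explicit computation of $\im(\sqrt{\lambda_j})$. I would treat them in that order.

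\emph{First-order claim.} Since $G$ is real and normal, it is unitarily diagonalizable: $G=U\Lambda U^*$ with $U$ unitary. Then $e^{-Gt}=Ue^{-\Lambda t}U^*$, so
\[
\|e^{-Gt}\|_2=\max_i |e^{-\lambda_i t}| = e^{-t\min_i \Re(\lambda_i)} .
\]
Hence $\|x(t)\|_2\le e^{-t\min_i\Re(\lambda_i)}\|x(0)\|_2$, which is exponential stability at rate $\min_i\Re(\lambda_i)$. Because $U$ is unitary, no condition-number factor appears.

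\emph{NAGD claim.} The eigenvalue $\lambda_j=a+ib$ with $b\neq 0$ satisfies $\lambda_j\notin\R_{\ge 0}$. Theorem~\ref{thm:normal}(b), equivalently Corollary~\ref{cor:generic}, then gives exponential growth of $\|x(t)\|_2$ for Lebesgue-almost every initial condition. The rate comes from Lemma~\ref{lem:asymptotics}(iii) applied to the modal variable $y=w^*x$, where $w$ is a left eigenvector for $\lambda_j$: outside the exceptional subspace $\mathcal S_{\lambda_j}$, $|y(t)|$ grows at rate $|\im(\sqrt{\lambda_j})|$. By the preimage argument in the proof of Corollary~\ref{cor:generic}, the modal data $(w^*x_0,w^*v_0)$ avoid $\mathcal S_{\lambda_j}$ for generic real initial data. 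This justifies the phrase ``generic modal data.''

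\emph{Rate formula.} Write $\sqrt{\lambda_j}=p+iq$ with $p\ge 0$, using the principal branch. Squaring gives
\[
p^2-q^2=a,\qquad 2pq=b,\qquad p^2+q^2=|\lambda_j|=\sqrt{a^2+b^2}.
\]
Subtracting the first relation from the third yields $q^2=\tfrac{1}{2}\bigl(\sqrt{a^2+b^2}-a\bigr)$, which is the stated formula after taking square roots. This quantity is strictly positive because $b\neq0$ forces $\sqrt{a^2+b^2}>|a|\ge a$. So the rate is nonzero even though $a>0$, which is precisely the contrast the corollary highlights.

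The only step that needs any care is making sure the growth rate transfers from the modal variable to the full state. The paper's own inequality $|y|\le\|w\|_2\|x\|_2$ handles this. I expect no real obstacle beyond citing Lemma~\ref{lem:asymptotics}(iii), which is deferred to the appendix.
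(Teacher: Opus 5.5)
Your proposal is correct and follows the route the paper intends. The paper gives no separate proof: it treats the corollary as an immediate consequence of Theorem~\ref{thm:normal}(b) (via Corollary~\ref{cor:generic}) and Lemma~\ref{lem:asymptotics}(iii), and leaves the first-order rate and the formula for $|\im(\sqrt{\lambda_j})|$ as standard computations, which your unitary-diagonalization bound $\|e^{-Gt}\|_2=e^{-t\min_i\Re(\lambda_i)}$ and your identity $q^2=\tfrac12(\sqrt{a^2+b^2}-a)$ supply.
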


\begin{corollary}[Purely imaginary modes]
\label{cor:imaginary}
If $G$ has a purely imaginary eigenvalue $\lambda=ib$, $b\neq0$, then
the corresponding first-order mode is marginal, while generic
corresponding NAGD modal data grow exponentially with growth rate
$\sqrt{|b|/2}$. In particular, if $G$ is skew-symmetric and has a nonzero
eigenvalue $ib$, then first-order dynamics are marginally stable, while
NAGD~\eqref{eq:NAGD_game} is exponentially unstable.
\end{corollary}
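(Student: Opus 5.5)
The proof has two parts: the statement about the first-order mode, and the statement about the NAGD mode for $\lambda = ib$.

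\textbf{First-order dynamics.} For a left eigenvector $w$ with $w^*G = ib\,w^*$, the modal variable $z(t) = w^*x(t)$ under $\dot x = -Gx$ satisfies $\dot z = -ib\,z$. Hence $z(t) = e^{-ib(t-t_0)}z(t_0)$ and $|z(t)| = |z(t_0)|$. The mode is therefore marginal: it neither grows nor decays.

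\textbf{NAGD growth rate.} We invoke Lemma~\ref{lem:asymptotics}(iii), since $\lambda = ib \in \C\setminus\R$. It remains to compute $|\im(\sqrt{ib})|$ for the principal root.
\begin{itemize}
\item For $b>0$: $ib = |b|e^{i\pi/2}$, so $\sqrt{ib} = \sqrt{|b|}\,e^{i\pi/4} = \sqrt{|b|/2}\,(1+i)$.
\item For $b<0$: $\sqrt{ib} = \sqrt{|b|/2}\,(1-i)$, which has nonnegative real part as required.
\end{itemize}
In both cases $|\im\sqrt{\lambda}| = \sqrt{|b|/2}$. This agrees with the formula of Corollary~\ref{cor:complex} at $a=0$: $\sqrt{(|b|-0)/2}$. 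Lemma~\ref{lem:asymptotics}(iii) then gives exponential growth at rate $\sqrt{|b|/2}$ for all modal data outside the exceptional line $\mathcal S_\lambda$, which is the meaning of ``generic.''

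\textbf{Skew-symmetric case.} If $G^\top = -G$, then $G$ is normal and $e^{-Gt}$ is orthogonal, since $(e^{-Gt})^\top e^{-Gt} = e^{Gt}e^{-Gt} = I$. Thus $\|x(t)\|_2 = \|x(t_0)\|_2$ under the first-order dynamics. These dynamics are Lyapunov (marginally) stable but not asymptotically stable whenever $b \neq 0$.

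For NAGD, $G$ has the eigenvalue $ib \notin \R_{\ge 0}$, so Corollary~\ref{cor:generic} (equivalently Theorem~\ref{thm:normal}(b)) gives exponential divergence of $\|x(t)\|_2$ for Lebesgue-almost every real initial condition. Such initial conditions exist arbitrarily close to the origin, because the exceptional set is a proper linear subspace and is therefore invariant under scaling. Hence $x=0$ is not bounded-stable, i.e., it is exponentially unstable.

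\textbf{Main obstacle.} The only delicate point is the branch choice for $\sqrt{ib}$ when $b<0$, and checking that the growth exponent does not depend on the branch. This holds because the solution space spanned by $J_1(\sqrt\lambda t)/t$ and $Y_1(\sqrt\lambda t)/t$ is the same for $\pm\sqrt\lambda$. Everything else is a direct citation of earlier results.
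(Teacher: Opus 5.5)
Your proposal is correct and takes essentially the route the paper intends. The paper gives no separate proof: it treats this result as the $a=0$ case of Corollary~\ref{cor:complex} via Lemma~\ref{lem:asymptotics}(iii), with Corollary~\ref{cor:generic} covering the skew-symmetric case. Citing Lemma~\ref{lem:asymptotics}(iii) directly, rather than Corollary~\ref{cor:complex} (whose hypothesis $\Re(\lambda_i)>0$ excludes $a=0$), and adding the scaling argument for the failure of bounded stability are both sound and make the paper's implicit argument slightly tighter.
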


\subsection{General (Non-Normal) Pseudo-Gradient}
\begin{theorem}[General characterization]
\label{thm:general}
Let $G \in \R^{N \times N}$ be arbitrary.
\begin{enumerate}
\item[(a)] If $G$ has an eigenvalue $\lambda \notin \R_{\geq 0}$, then
Lebesgue-almost every initial condition leads to exponential growth. In
particular, there exist initial conditions with exponential growth.
\item[(b)] If all eigenvalues are in $\R_{\geq 0}$ and $G$ is
diagonalizable, all trajectories are bounded and converge:
$x(t) \to x^\infty \in \N(G)$, $\dot{x}(t) \to 0$, with
\begin{multline}
\sup_{t \geq t_0}\|x(t)\|_2 \leq \\
\kappa(P)\bigl(\|x(t_0)\|_2
  + C(t_0, \{\lambda_i\})\|\dot{x}(t_0)\|_2\bigr),
\end{multline}
where $P$ diagonalizes $G$ and
\[
C(t_0,\{\lambda_i\})
=
\max\{t_0/2,\lambda_{\min}^{-1/2}\},
\]
with $\lambda_{\min}=\min\{\lambda_i:\lambda_i>0\}$; if all eigenvalues
are zero, set $C=t_0/2$.
\end{enumerate}
\end{theorem}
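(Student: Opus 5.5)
Part (a) is exactly Corollary~\ref{cor:generic}, which uses only a left eigenvector of the offending eigenvalue and needs no normality; we will simply cite it. The work is in part (b).

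For (b), write $G = P\Lambda P^{-1}$ with $\Lambda=\mathrm{diag}(\lambda_1,\dots,\lambda_N)$, $\lambda_i\ge 0$. Since the eigenvalues are real and $G$ is real and diagonalizable, $P$ can be taken real. Set $y(t)=P^{-1}x(t)$. The rows of $P^{-1}$ are left eigenvectors, so by Lemma~\ref{lem:modal} each $y_i$ solves $\ddot y_i+\frac3t\dot y_i+\lambda_i y_i=0$ with data $(y_i(t_0),\dot y_i(t_0))$ given by the $i$th entries of $P^{-1}x(t_0)$ and $P^{-1}\dot x(t_0)$. Convergence then follows mode by mode. For $\lambda_i>0$, Lemma~\ref{lem:asymptotics}(i) gives $y_i,\dot y_i=O(t^{-3/2})$. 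For $\lambda_i=0$, Lemma~\ref{lem:asymptotics}(iv) gives $y_i\to y_i(t_0)+\frac{t_0}{2}\dot y_i(t_0)$ and $\dot y_i\to 0$. Hence $x=Py\to x^\infty=\sum_{\lambda_i=0} y_i^\infty p_i$, which lies in $\N(G)$ because the corresponding columns $p_i$ are right null vectors. Also $\dot x\to 0$, and the components along the positive eigencolumns decay like $t^{-3/2}$.

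For the explicit bound, we aim at a scalar estimate
\[
\sup_{t\ge t_0}|y_i(t)|\le |y_i(t_0)|+C\,|\dot y_i(t_0)|
\]
for each mode. For $\lambda_i=0$ this follows directly from (iv) with constant $t_0/2$. For $\lambda_i>0$ we use the energy $E_i(t)=\tfrac12\dot y_i^2+\tfrac12\lambda_i y_i^2$, which satisfies $\dot E_i=-\frac3t\dot y_i^2\le 0$. Then $\sqrt{\lambda_i}\,|y_i(t)|\le\sqrt{2E_i(t_0)}\le \sqrt{\lambda_i}|y_i(t_0)|+|\dot y_i(t_0)|$, so $|y_i(t)|\le |y_i(t_0)|+\lambda_i^{-1/2}|\dot y_i(t_0)|\le|y_i(t_0)|+\lambda_{\min}^{-1/2}|\dot y_i(t_0)|$. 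Taking the max over the two cases gives the constant $C=\max\{t_0/2,\lambda_{\min}^{-1/2}\}$ uniformly in $i$. The same energy argument bounds $\dot y_i$, giving boundedness of $\dot x$ as well.

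Lifting back to $\R^N$ is where care is needed, and I expect it to be the main obstacle: a per-coordinate bound must be turned into a Euclidean bound with exactly the factor $\kappa(P)$. We would argue vectorially. The energy argument gives $|y_i(t)|\le |y_i(t_0)|+C|\dot y_i(t_0)|$ coordinatewise, hence $|y(t)|\le |y(t_0)|+C|\dot y(t_0)|$ entrywise. Taking Euclidean norms, $\|y(t)\|_2\le\|y(t_0)\|_2+C\|\dot y(t_0)\|_2$. Then
\[
\|x(t)\|_2\le\|P\|\,\|y(t)\|_2\le \|P\|\,\|P^{-1}\|\bigl(\|x(t_0)\|_2+C\|\dot x(t_0)\|_2\bigr),
\]
which is the stated inequality. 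The triangle inequality in the entrywise step is the delicate point. If it looked problematic, I would fall back to the monotone quantity $\sum_i E_i$ restricted to the positive modes, together with the explicit formula for the null modes.
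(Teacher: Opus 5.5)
Your proposal is correct and follows the paper's proof essentially step for step. It cites Corollary~\ref{cor:generic} for (a); for (b) it sets $y=P^{-1}x$, uses Lemma~\ref{lem:asymptotics}(i),(iv) for convergence, bounds positive modes with the dissipated modal energy and null modes with the explicit formula, and lifts back with $\|P\|\,\|P^{-1}\|$. The entrywise-to-Euclidean step you flag as delicate is just Minkowski's inequality in $\ell^2$, applied to the nonnegative vectors $(|y_i(t_0)|)_i$ and $(|\dot y_i(t_0)|)_i$, exactly as the paper does, so no fallback is needed.
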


\begin{proof}
Part~(a) follows immediately from Corollary~\ref{cor:generic}.

For part~(b), let $G=P\Lambda P^{-1}$, where
$\Lambda=\operatorname{diag}(\lambda_1,\ldots,\lambda_N)$ and
$\lambda_i\in\R_{\geq0}$. Define
\[
y(t)=P^{-1}x(t).
\]
Then $x(t)=P y(t)$, $\dot x(t)=P\dot y(t)$, and each coordinate satisfies
\[
\ddot y_i+\frac{3}{t}\dot y_i+\lambda_i y_i=0.
\]

If $\lambda_i>0$, Lemma~\ref{lem:asymptotics}(i) gives
$y_i(t)\to0$ and $\dot y_i(t)\to0$. If $\lambda_i=0$,
Lemma~\ref{lem:asymptotics}(iv) gives convergence of $y_i(t)$ and
$\dot y_i(t)\to0$. Therefore $y(t)$ converges and $\dot y(t)\to0$.
Since the positive-eigenvalue coordinates converge to zero, the limit
$x^\infty=P y^\infty$ lies in the span of the zero-eigenvalue right
eigenvectors, hence $x^\infty\in\N(G)$. Also,
$\dot x(t)=P\dot y(t)\to0$.

It remains to prove the stated uniform bound. For a mode with
$\lambda_i>0$, define the modal energy
\[
E_i(t)=\frac{1}{2}|\dot y_i(t)|^2+\frac{\lambda_i}{2}|y_i(t)|^2.
\]
Taking real parts after multiplying the scalar equation by
$\overline{\dot y_i(t)}$ gives
\[
\dot E_i(t)
=
-\frac{3}{t}|\dot y_i(t)|^2
\leq 0.
\]
Hence $E_i(t)\leq E_i(t_0)$, and therefore
\[
|y_i(t)|
\leq
\left(|y_i(t_0)|^2+\lambda_i^{-1}|\dot y_i(t_0)|^2\right)^{1/2}
\leq
|y_i(t_0)|+\lambda_i^{-1/2}|\dot y_i(t_0)|.
\]
For a mode with $\lambda_i=0$, Lemma~\ref{lem:asymptotics}(iv) gives
\[
|y_i(t)|
\leq
|y_i(t_0)|+\frac{t_0}{2}|\dot y_i(t_0)|.
\]
Thus, for every mode,
\[
|y_i(t)|\leq |y_i(t_0)|+
C(t_0,\{\lambda_i\})|\dot y_i(t_0)|.
\]
Taking the Euclidean norm and using the triangle inequality,
\[
\|y(t)\|_2
\leq
\|y(t_0)\|_2+
C(t_0,\{\lambda_i\})\|\dot y(t_0)\|_2.
\]
Finally,
\[
\|y(t_0)\|_2\leq \|P^{-1}\|_2\|x(t_0)\|_2,
\qquad
\|\dot y(t_0)\|_2\leq \|P^{-1}\|_2\|\dot x(t_0)\|_2,
\]
and
\[
\|x(t)\|_2\leq \|P\|_2\|y(t)\|_2.
\]
Combining these inequalities gives the stated bound with
$\kappa(P)=\|P\|_2\|P^{-1}\|_2$.
\end{proof}
\begin{remark}[Non-diagonalizable case]
\label{rem:jordan}
When $G$ has non-trivial Jordan blocks with eigenvalues in $\R_{\geq 0}$, generalized eigenvector contributions may cause polynomial growth. The instability direction (part~(a)) is unconditional; only the convergence direction (part~(b)) requires diagonalizability. A complete characterization of the Jordan block case is left to future work.
\end{remark}

\begin{remark}[Bound degeneracy for small eigenvalues]
\label{rem:bound}
The constant $C(t_0,\{\lambda_i\})$ in Theorem~\ref{thm:general}(b)
grows as $\lambda_{\min}^{-1/2}$ when the smallest positive eigenvalue
approaches zero. This reflects the loss of coercivity in the modal energy
estimate for small positive eigenvalues, not a failure of convergence:
each mode still converges by Lemma~\ref{lem:asymptotics}(i,iv), but the
transient bound on $\|x(t)\|_2$ degrades for near-semidefinite~$G$.
\end{remark}
\subsection{Summary}
Table~\ref{tab:comparison} summarizes the stability conditions.

\begin{table}[!ht]
\renewcommand{\arraystretch}{1.3}
\caption{Stability Conditions for Equilibrium Seeking}
\label{tab:comparison}
\centering
\begin{tabular}{|l|l|}
\hline
\textbf{Dynamics} & \textbf{Condition / conclusion} \\
\hline
$\dot{x} = -Gx$ & $\Re(\lambda_i) > 0$ for all $i$ \\
\hline
$\ddot{x} + \frac{3}{t}\dot{x} + Gx = 0$
& unstable if some $\lambda_i\notin\R_{\geq0}$ \\
\hline
$\ddot{x} + \frac{3}{t}\dot{x} + Gx = 0$
& convergent if $\sigma(G)\subset\R_{\geq0}$ and $G$ diagonalizable \\
\hline
\end{tabular}
\end{table}

\section{Numerical Simulations}
\label{sec:simulations}

All simulations use fourth-order Runge--Kutta with $\Delta t = 0.01$ and $t_0 = 1$.

\subsection{Stable: Symmetric Positive Definite $G$}
Figure~\ref{fig:symmetric_pd} verifies Theorem~\ref{thm:symmetric}(a) with $G = \left[\begin{smallmatrix} 0.4 & 0.2 \\ 0.2 & 0.8 \end{smallmatrix}\right]$ (eigenvalues $\approx 0.32, 0.88$). The trajectory converges at the predicted $O(t^{-3/2})$ rate with oscillations reflecting the Bessel function structure.

\subsection{Unstable: Complex Eigenvalues with Positive Real Parts}
Figure~\ref{fig:complex} demonstrates the central finding using $G = \left[\begin{smallmatrix} 6 & 1.5 \\ -1.5 & 6 \end{smallmatrix}\right]$ (eigenvalues $6 \pm 1.5i$). First-order dynamics decay at rate $e^{-6t}$, while NAGD grows exponentially at rate $|\im(\sqrt{\lambda})| \approx 0.30$, confirming Corollary~\ref{cor:complex}. This setting also subsumes the zero-sum case of Corollary~\ref{cor:imaginary} as the special case $a = 0$.

\subsection{Unstable: Negative Real Eigenvalue}
Figure~\ref{fig:negative} illustrates Theorem~\ref{thm:symmetric}(b) with $G = \mathrm{diag}(1, -0.5)$. The stable mode ($\lambda_1 = 1$) decays while the unstable mode ($\lambda_2 = -0.5$) grows at the predicted rate $e^{\sqrt{0.5}\,t}$.

\subsection{Semidefinite: Convergence to Null Space}
Figure~\ref{fig:semidefinite} demonstrates Theorem~\ref{thm:symmetric}(a) for $G = \frac{1}{4}\left[\begin{smallmatrix} 1 & 1 \\ 1 & 1 \end{smallmatrix}\right]$ (eigenvalues $0, 0.5$). The trajectory converges to $\N(G) = \mathrm{span}\{(1,-1)^\top\}$ at rate $O(t^{-3/2})$.

\subsection{Multiplayer Stable Games}
Figure~\ref{fig:multiplayer} extends the results to $3$- and $4$-player potential games with symmetric positive definite $G$.
For the 3-player game, we use the symmetric positive definite matrix
\begin{equation}
G_3 = \begin{bmatrix} 1.0 & 0.3 & 0.2 \\ 0.3 & 0.8 & 0.25 \\ 0.2 & 0.25 & 0.6 \end{bmatrix},
\end{equation}
with eigenvalues $\lambda \approx 0.43, 0.62, 1.35$. Panel (a) shows the 3D trajectory spiraling to the origin, while panel (b) confirms the $O(t^{-3/2})$ convergence rate. For the 4-player game, we consider
\begin{equation}
G_4 = \begin{bmatrix} 1.2 & 0.2 & 0.15 & 0.1 \\ 0.2 & 0.9 & 0.2 & 0.15 \\ 0.15 & 0.2 & 0.7 & 0.1 \\ 0.1 & 0.15 & 0.1 & 0.5 \end{bmatrix},
\end{equation}
with eigenvalues $\lambda \approx 0.44, 0.58, 0.87, 1.41$. Panel (c) displays all four components converging to zero with damped oscillations characteristic of Bessel function solutions, and panel (d) verifies the theoretical $O(t^{-3/2})$ decay rate.

\begin{figure}[!t]
\centering
\includegraphics[width=\linewidth]{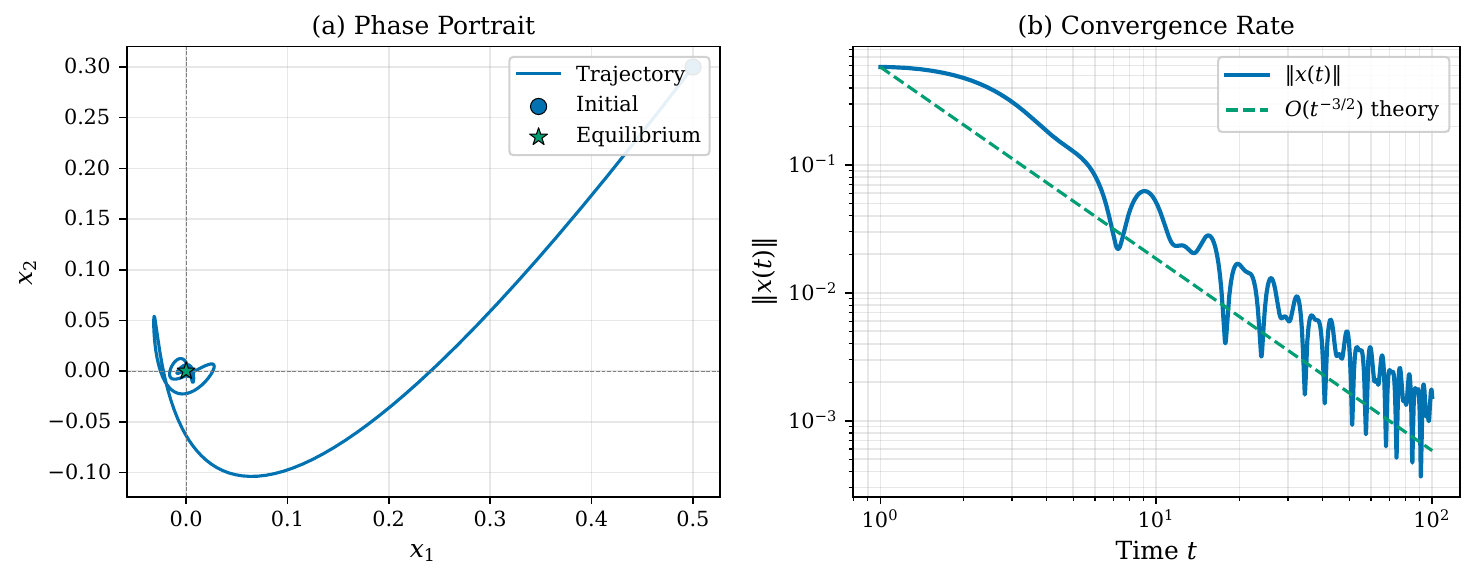}
\caption{Symmetric positive definite $G$ (eigenvalues $0.32, 0.88$): (a)~phase portrait; (b)~$O(t^{-3/2})$ decay confirming Theorem~\ref{thm:symmetric}(a).}
\label{fig:symmetric_pd}
\end{figure}

\begin{figure}[!t]
\centering
\includegraphics[width=\linewidth]{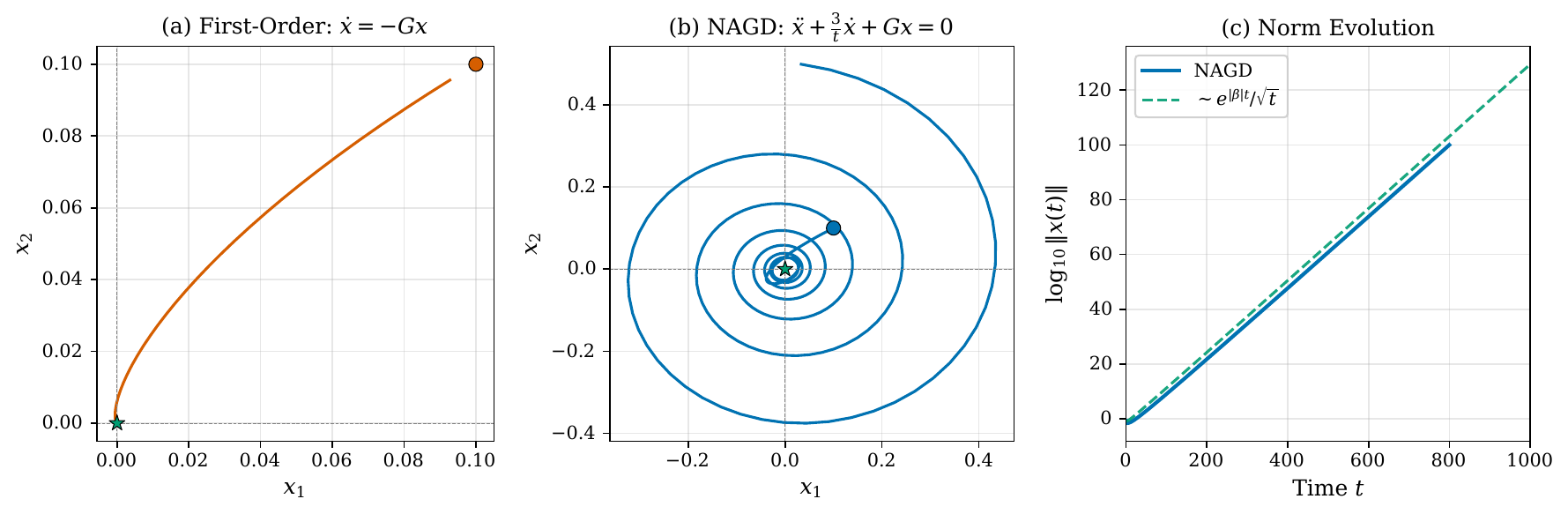}
\caption{Normal $G$ with eigenvalues $6 \pm 1.5i$: (a)~first-order dynamics converge; (b)~NAGD diverges; (c)~exponential separation confirming Corollary~\ref{cor:complex}.}
\label{fig:complex}
\end{figure}

\begin{figure}[!t]
\centering
\includegraphics[width=\linewidth]{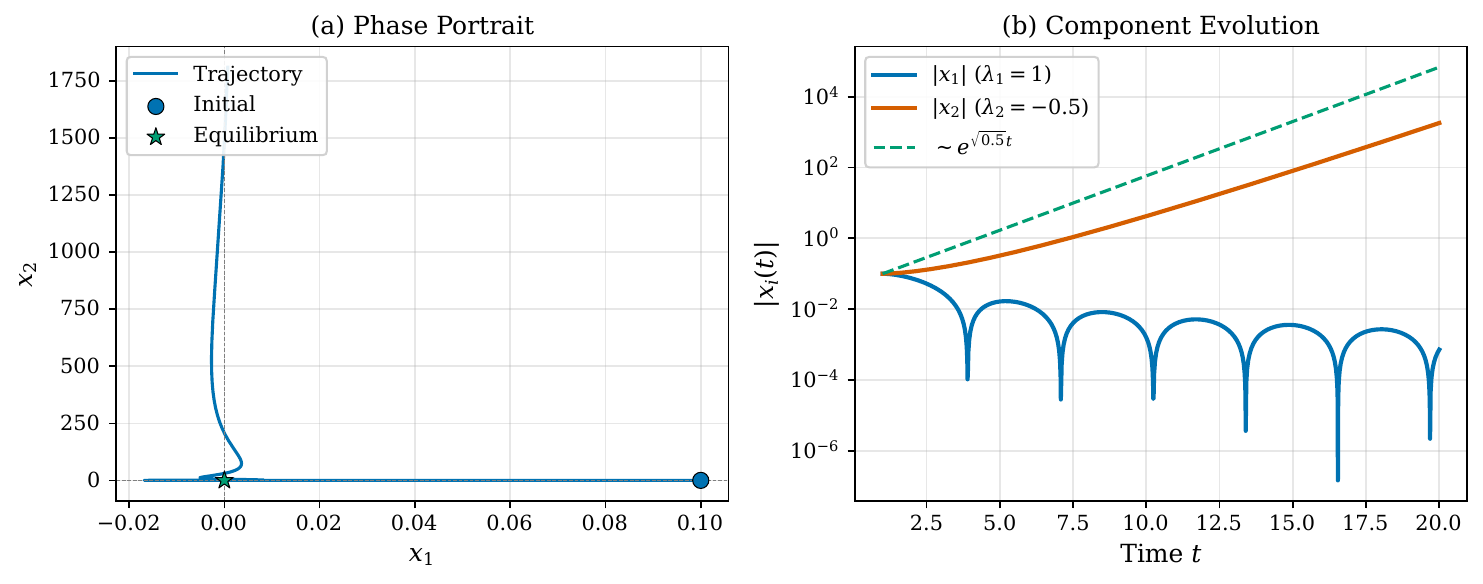}
\caption{Diagonal $G$ (eigenvalues $1, -0.5$): (a)~unbounded growth in unstable direction; (b)~growth rate $e^{\sqrt{0.5}\,t}$ confirms Theorem~\ref{thm:symmetric}(b).}
\label{fig:negative}
\end{figure}

\begin{figure}[!t]
\centering
\includegraphics[width=\linewidth]{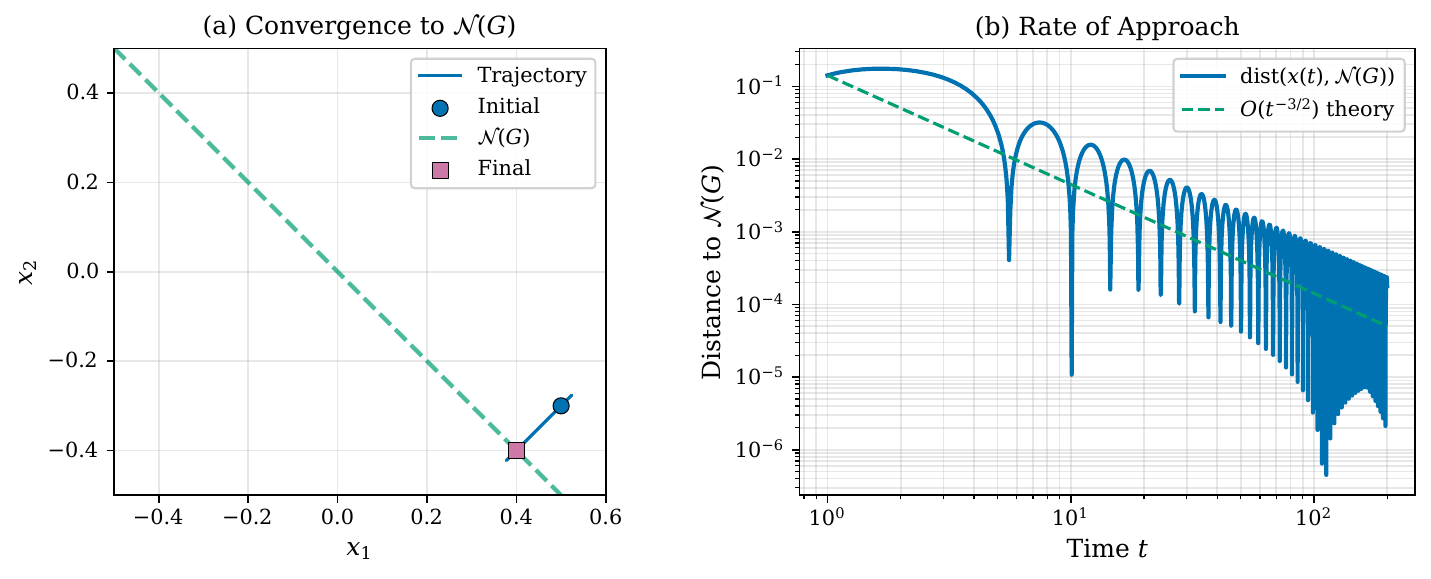}
\caption{Positive semidefinite $G$ (eigenvalues $0, 0.5$): (a)~convergence to $\N(G)$; (b)~$O(t^{-3/2})$ approach rate.}
\label{fig:semidefinite}
\end{figure}

\begin{figure}[!t]
\centering
\includegraphics[width=\linewidth]{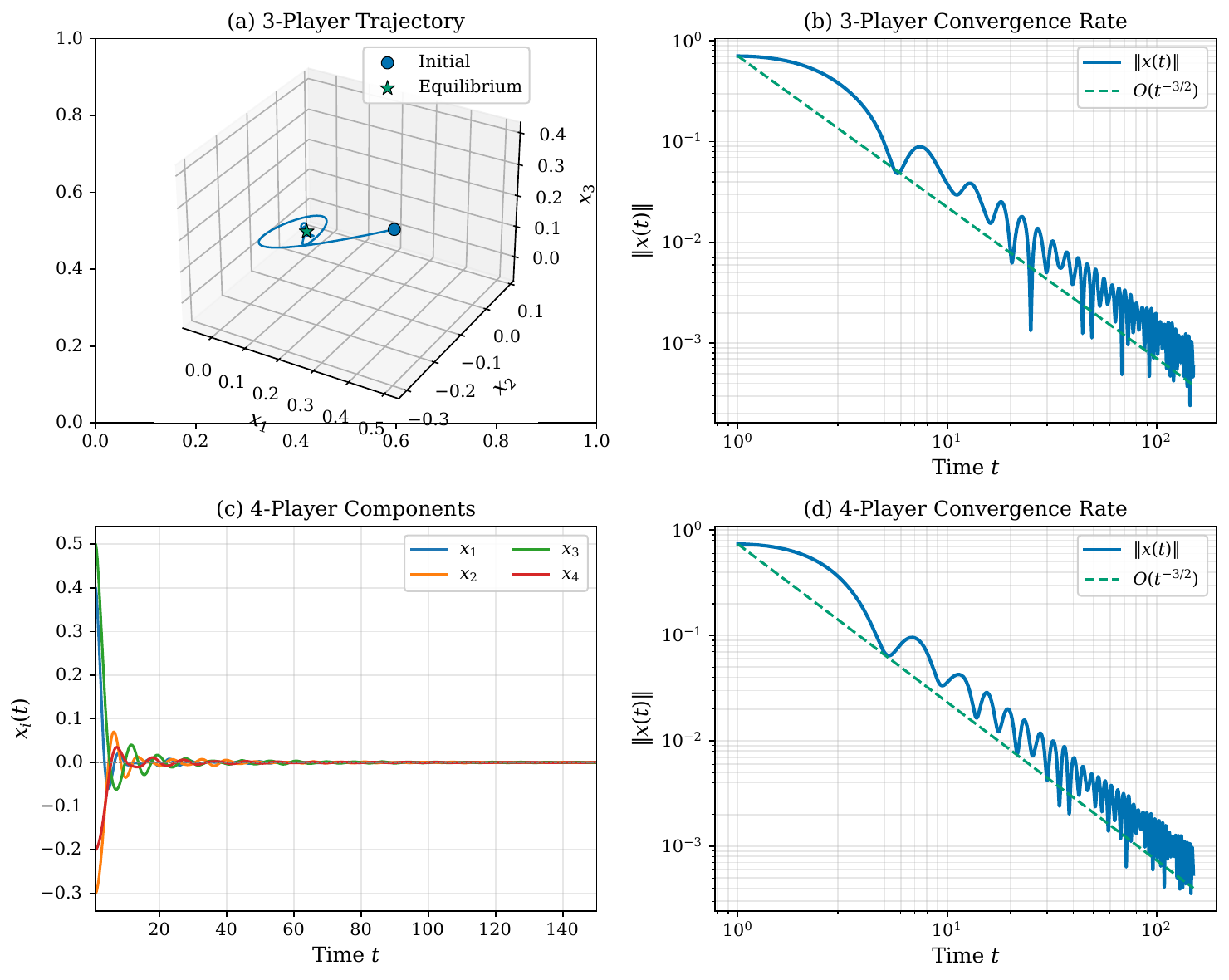}
\caption{Multiplayer potential games: (a,b)~3-player; (c,d)~4-player. All confirm $O(t^{-3/2})$ decay from Theorem~\ref{thm:symmetric}(a).}
\label{fig:multiplayer}
\end{figure}

\section{Conclusion}
\label{sec:discussion}

We have shown that NAGD is unstable for Nash equilibrium seeking whenever any eigenvalue of $G$ lies outside $\R_{\geq 0}$, and that all trajectories converge when eigenvalues lie in $\R_{\geq 0}$ and $G$ is diagonalizable. This is strictly more restrictive than the $\Re(\lambda_i) > 0$ condition for first-order dynamics: complex eigenvalues with positive real parts, ubiquitous in non-potential games, destabilize NAGD. For potential games ($G = G^\top \succeq 0$), NAGD retains its optimization benefits; for general games, including zero-sum games, first-order or extragradient methods \cite{korpelevich1976} may be preferable.

Future directions include the non-diagonalizable case, extension to nonlinear games via linearization, general damping $r \neq 3$, discrete-time schemes, and modified momentum methods stable for broader game classes.

\section*{Use of Generative AI}
Anthropic's Claude \cite{claude} was used to assist in writing Python code for the numerical simulations in Section~V. The authors take full responsibility for the correctness of all technical content, proofs, and final text.

\appendices
\section{Proof of Lemma~\ref{lem:asymptotics}(iii)}
\label{app:proof_iii}

\begin{proof}
Let $\lambda\in\C\setminus\R$ and write
\[
\sqrt{\lambda}=\alpha+i\beta,
\qquad
\alpha\geq0,
\qquad
\beta\neq0.
\]
We first consider $\beta>0$; the case $\beta<0$ is identical with the
roles of $H_1^{(1)}$ and $H_1^{(2)}$ interchanged.

In the Hankel basis, the general solution is
\[
y(t)
=
\frac{1}{t}
\left[
c_1 H_1^{(1)}(\sqrt{\lambda}\,t)
+
c_2 H_1^{(2)}(\sqrt{\lambda}\,t)
\right].
\]
The large-argument asymptotics
\[
H_1^{(1)}(z)\sim \sqrt{\frac{2}{\pi z}}e^{i(z-3\pi/4)},
\qquad
H_1^{(2)}(z)\sim \sqrt{\frac{2}{\pi z}}e^{-i(z-3\pi/4)}
\]
give, with $z=(\alpha+i\beta)t$,
\[
\frac{1}{t}H_1^{(1)}(\sqrt{\lambda}\,t)
=
O(t^{-3/2}e^{-\beta t}),
\qquad
\frac{1}{t}H_1^{(2)}(\sqrt{\lambda}\,t)
=
O(t^{-3/2}e^{\beta t}).
\]
Moreover, the second estimate has a nonzero leading asymptotic coefficient.
Thus the solution grows exponentially at rate $\beta$ unless $c_2=0$.
The condition $c_2=0$ defines a one-dimensional complex subspace
$\mathcal S_\lambda\subset\C^2$ of scalar initial data.

It remains to show that this exceptional subspace contains no nonzero real
scalar initial data. Suppose, for contradiction, that
\[
(y(t_0),\dot y(t_0))\in\R^2\setminus\{(0,0)\}
\]
and $c_2=0$. Then
\[
y(t)=O(t^{-3/2}e^{-\beta t}),
\qquad
\dot y(t)=O(t^{-3/2}e^{-\beta t}).
\]
The derivative estimate follows by differentiating the decaying Hankel
asymptotic; the leading differentiated exponential factor remains of order
$t^{-3/2}e^{-\beta t}$.

Since the initial data are real, define
\[
Q(t)=y(t)\overline{\dot y(t)}-\dot y(t)\overline{y(t)}.
\]
Using
\[
\ddot y+\frac{3}{t}\dot y+\lambda y=0,
\qquad
\overline{\ddot y}+\frac{3}{t}\overline{\dot y}
+\overline{\lambda}\,\overline y=0,
\]
a direct computation gives
\[
\frac{d}{dt}\left(t^3Q(t)\right)
=
t^3(\lambda-\overline{\lambda})|y(t)|^2
=
2i\,\im(\lambda)\,t^3|y(t)|^2.
\]
At $t=t_0$, the reality of $y(t_0)$ and $\dot y(t_0)$ gives
$Q(t_0)=0$. On the other hand, the assumed decay gives
\[
t^3Q(t)\to0
\qquad\text{as }t\to\infty.
\]
Integrating from $t_0$ to $\infty$ yields
\[
0
=
2i\,\im(\lambda)
\int_{t_0}^{\infty} t^3 |y(t)|^2\,dt.
\]
Since $\im(\lambda)\neq0$ and the solution is not identically zero, the
integral is strictly positive, a contradiction. Therefore
$\mathcal S_\lambda\cap\R^2=\{(0,0)\}$.

For $\beta<0$, the same argument applies after interchanging the growing
and decaying Hankel modes. Hence, for all scalar initial data outside
$\mathcal S_\lambda$, the solution grows exponentially at rate
$|\beta|=|\im(\sqrt{\lambda})|$, and every nonzero real scalar initial
condition lies outside $\mathcal S_\lambda$.
\end{proof}
\bibliographystyle{IEEEtran}
\bibliography{sources}

\end{document}